\documentclass[12pt]{amsart}
\usepackage{amsmath,amssymb,amsthm,amsfonts,stmaryrd,graphicx}
\usepackage{stmaryrd}
\usepackage[english]{babel}
\usepackage{caption}
\textwidth 16.1cm \textheight 21.08cm \topmargin 0.0cm
\oddsidemargin 0.0cm \evensidemargin 0.0cm
\parskip -0.0cm
\vfuzz2pt 
\hfuzz2pt 
\newtheorem{theorem}{{\bf Theorem}}[section]

\newtheorem{proposition}{{\bf Proposition}}[section]

\theoremstyle{definition}
\newtheorem{definition}{{\bf Definition}}[section]
\theoremstyle{remark}

\numberwithin{equation}{section}

\newcommand{\N}{\mathbb{N}}

\newcommand{\qbinomial}[3]{\mbox{$
\biggl[ 
\begin{array}{c}
#1\\
 #2
\end{array}\biggr]_{
\!{#3}} $} }

%


\newcommand{\be}{\begin{equation}}
\newcommand{\ee}{\end{equation}}
\newcommand{\bea}{\begin{eqnarray}}
\newcommand{\eea}{\end{eqnarray}}
\newcommand{\bd}{\begin{displaymath}}
\newcommand{\ed}{\end{displaymath}}

\begin{document}

\title[ Some homogeneous $q$-difference operators and the associated  generalized Hahn polynomials
 ]{Some homogeneous $q$-difference operators and the associated  generalized Hahn polynomials }%
\author{Hari M. Srivastava$^{1,\ddag}$,  Sama Arjika$^{1,\ast}$ and Abey Kelil$^{3,\dag}$}%
\address{$^{1}$Department of Mathematics and Statistics, University of Victoria,
Victoria, British Columbia V8W 3R4, Canada}
\address{${}^{2}$Faculty of Sciences and Technics,   University of Agadez,   Niger}
\address{$^{3}$Department of Mathematics and Applied Mathematics, University of Pretoria,   S. A.}

\email{ $\ddag$harimsri@math.uvic.ca,$\ast$rjksama2008@gmail.com, $\dag$ abeysh2001@gmail.com}%
\subjclass[2010]{ 05A30, 33D15, 33D45}
 \keywords{Basic hypergeometric series; Homogeneous $q$-difference operator; $q$-binomial theorem;  Cauchy polynomials;   Hahn polynomials}

\begin{abstract}
In this paper,   we first construct the homogeneous $q$-shift operator $\widetilde{E}(a,b;D_{q})$  and the homogeneous $q$-difference operator $\widetilde{L}(a,b; \theta_{xy})$.  We then apply these operators in order to represent
and investigate generalized Cauchy and a general form of Hahn polynomials. 
We derive some $q$-identities such as:  generating functions,   extended generating functions,  Mehler's formula and   Roger's formula for these $q$-polynomials.  
\end{abstract}

\maketitle

\section{Introduction}
We adopt the common conventions and notations on $q$-series.  For the convenience of the reader, we provide a  summary of the mathematical notations and definitions to be used  in this paper. We refer  to the general references (see \cite{Koekock})  for the definitions and notations. Throughout this paper, we assume that  $0<q \leq 1$. 

For complex  numbers $a$, the $q$-shifted factorials are defined by:
\be
(a;q)_{0}:=1,\; (a;q)_{n}:=\prod_{k=0}^{n-1}(1-aq^{k}), \;\;n\in\N
\ee 
and for large $n$, we have  
\be
 (a;q)_{\infty}:=\prod_{k=0}^{\infty}(1-aq^{k}).
\ee
The $q$-numbers and  $q$-factorials are defined as follows:
\be 
 [n]_{q}:=\frac{1-q^n}{1-q},\quad  [n]_{q}!:=\prod_{k=1}^n\frac{1-q^k}{1-q},\quad [0]_{q}!:=1.
\ee
The $q$-binomial coefficients   are given by  
\bea 
\label{samz}
 {\,n\,\atopwithdelims []\,k\,}_{q} : &=&\frac{[n]_{q}! }
{[k]_{q}!\,[n-k]_{q}!}.
\eea
The    basic or q-hypergeometric function 
  in the variable $x$ (see  Slater \cite[Chap. 3]{SLATER},  Srivastava and Karlsson   \cite[p.347, Eq. (272)]{SrivastaKarlsson} 
 for details) is defined as \cite{Koekock}:
\be 
\label{defe}
{}_{r}\Phi_s\left(\begin{array}{c}a_1, a_2,\ldots, a_r
 \\
b_1,b_2,\ldots,b_s
 \end{array}\Big|
q;x\right)
 =\sum_{n=0}^\infty\Big[(-1)^n q^{({}^n_2)}\Big]^{1+s-r}\,\frac{(a_1, a_2,\ldots, a_r;q)_n}{(b_1,b_2,\ldots,b_s;q)_n}\frac{ x^n}{(q;q)_n}.  
\ee
Basic  or $q$-hypergeometric series and various associated families of $q$-polynomials are useful in a wide variety of fields including, for example, the theory of partitions, number theory, combinatorial analysis, finite vector spaces, Lie theory, particle physics, non-linear electric circuit theory, mechanical engineering, theory of heat conduction, quantum mechanics, cosmology, and statistics (see \cite[pp. 346-351]{SrivastaKarlsson} and the references cited therein).

We will be mainly concerned with the Cauchy polynomials as given
below  \cite{GasparRahman}
\bea
\label{def}
 p_n(x,y):=(x-y)( x- qy)\cdots ( x-q^{n-1}y) =(y/x;q)_nx^n
\eea
with the generating function \cite{Chen2003}
\be
\label{gener}
\sum_{k=0}^{\infty} p_n(x,y)
\frac{t^n }{(q;q)_k} = 
\frac{(yt;q)_\infty}{(xt;q)_\infty},
\ee
where \cite{Chen2003}
\be
\label{asq}
  p_n(x,y)=(-1)^n q^{({}^n_2)}p_n(y,q^{1-n}x),
\ee
and
\be
\label{aasq}
  p_{n-k}(x,q^{1-n}y)=(-1)^{n-k} q^{({}^k_2)-({}^n_2)}p_{n-k}(y,q^{k}x).
\ee
 which naturally arise in the $q$-umbral calculus  \cite{Andrews}, Goldman and Rota \cite{Roman1982},  Ihrig and Ismail \cite{Ismail1981}, Johnson \cite{Johnson} and Roman \cite{Roman1982}. The generating function (\ref{gener}) is also the homogeneous version
of the Cauchy identity or the $q$-binomial theorem  \cite{GasparRahman}
\be
\label{putt}
\sum_{k=0}^{\infty} 
\frac{(a;q)_k }{(q;q)_k}x^{k}={}_{1}\Phi_0\left(\begin{array}{c}a
 \\
-
 \end{array}\Big|
q;x\right)= 
\frac{(ax;q)_\infty}{(x;q)_\infty}. 
\ee
Putting    $a=0$, the relation (\ref{putt}) becomes  Euler's identity  \cite{GasparRahman}
\be
\label{q-expo-alpha}
  \sum_{k=0}^{\infty} \frac{ x^{k}}{(q;q)_k}=\frac{1}{(x;q)_\infty} 
\ee
and its inverse relation  \cite{GasparRahman}
\be
\label{q-Expo-alpha}
 \sum_{k=0}^{\infty} 
\frac{(-1)^kq^{ ({}^k_2)
}\,x^{k}}{(q;q)_k}=(x;q)_\infty.
\ee

 Recently, Saad and Sukhi  \cite{Saad}  have introduced the following q-exponential operator $R(b{D}_q)$   as follows 
\be 
\label{mats}
R(bD_q)=\sum_{k=0}^\infty \frac{(-1)^kq^{({}^k_2)}  }{(q;q)_k}\, \left(b\,{D}_q\right)^k
\ee 
where the $q$-differential operator, or the $q$-derivative, acting on the variable a, is defined by \cite{Liu97,Saadsukhi} 
\be
\label{deffd}
 {D}_q\big\{f(a)\big\}=\frac{f(a)-f( q a)}{ a}.
\ee
Evidently,
\be 
R(yD_q)(x^n)=p_n(x,y).
\ee
Suppose that the operator $R(yD_q)$ acts on the variable $x$, then we have the following  \cite{Saad,SrivastaAbdlhusein}:
\be 
R(yD_q)
\Bigg\{\frac{1}{(xt;q)_\infty}\Bigg\} = \frac{(yt;q)_\infty }{(xt;q)_\infty},
\ee
\be 
R(yD_q)
\Bigg\{\frac{1 }{(xt,xs;q)_\infty}\Bigg\} =\frac{(ys;q)_\infty }{(xt,xs;q)_\infty}{}_{1}\Phi_1\left(\begin{array}{c}xs
 \\
ys
 \end{array}\Big|
q;yt\right),\label{v09}
\ee
\be 
R(yD_q)
\Bigg\{\frac{(xv;q)_\infty }{(xt,xs;q)_\infty}\Bigg\} =\frac{(ys;q)_\infty }{(xs;q)_\infty}{}_{2}\Phi_1\left(\begin{array}{c}v/t, y/x
 \\
ys
 \end{array}\Big|
q;xt\right). \label{v0}
\ee
 Srivastava and Abdlhusein  \cite{SrivastaAbdlhusein}, showed that by setting $v = 0$ in (\ref{v0}), it becomes:  
\be 
R(yD_q)
\Bigg\{\frac{1 }{(xt,xs;q)_\infty}\Bigg\} =\frac{(ys;q)_\infty }{(xs;q)_\infty}{}_{2}\Phi_1\left(\begin{array}{c}y/x,0
 \\
ys
 \end{array}\Big|
q;xt\right).  \label{v10}
\ee
Comparing the identity (\ref{v10}) and the identity (\ref{v09}), we get the following transformation
\be 
\label{sub}
 {}_{2}\Phi_1\left(\begin{array}{c}y/x,0
 \\
ys
 \end{array}\Big|
q;xt\right)=\frac{1 }{(xt;q)_\infty}{}_{1}\Phi_1\left(\begin{array}{c}xs
 \\
ys
 \end{array}\Big|
q;yt\right).
\ee

Motivated by Saad and Sukhi  \cite{Saad},   Srivastava and Abdlhusein   \cite{SrivastaAbdlhusein} works,   our interest    is to  introduce new  homogeneous $q$-difference operators    $\widetilde{E}(a,b;D_{q})$ and $\widetilde{T}(a,b;\theta_{xy})$.  The first  homogeneous q-difference operator 
$\widetilde{E}(a,b;D_{q})$ is defined 
 by
\be 
\label{samat}
\widetilde{E}(a,b; {D}_{q})=\sum_{k=0}^\infty \frac{(-1)^kq^{({}^k_2)}\,(a;q)_k }{(q;q)_k}\, \left(b\,{D}_{q}\right)^k.
\ee
Compared with $R(yD_q)$, the homogeneous q-difference operator  (\ref{samat}) involves two parameters. Clearly, the operator $R(yD_q)$ can be considered as a special case of the   operator (\ref{samat}) for  $a = 0$.

Second, we introduce another homogeneous q-difference operator  $\widetilde{L}(a,b; \theta_{xy})$ by
\be 
\label{aqamat}
\widetilde{L}(a,b; \theta_{xy})=\sum_{k=0}^\infty \frac{q^{({}^k_2)}\,(a;q)_k }{(q;q)_k}\, \left(b\,\theta_{xy}\right)^k,
\ee
where \cite{Liu97,Saadsukhi} 
\be
\label{deffd}
  \theta_{xy}\Big\{f(x,y)\Big\}=\frac{f(q^{-1}x,y)-f(  x,qy)}{ q^{-1}x- y}
\ee
 acting on  functions of suitable  variables $x$ and $y$  \cite{SrivastaAbdlhusein,Saadsukhi}
\be 
\label{deule2}
 \theta_{xy}^k\, p_n(y,x)=(-1)^k\frac{(q;q)_n}{(q;q)_{n-k}}p_{n-k}(y,x)\mbox{ and }  \theta_{xy}^k\Bigg\{\frac{(xt;q)_\infty}{(yt;q)_\infty}\Bigg\} =(-t)^k\frac{(xt;q)_\infty}{(yt;q)_\infty}.
\ee
The homogeneous q-difference operator  (\ref{aqamat}) involves two parameters. It can   be considered as a generalization of the  homogeneous $q$-difference operator  $ {L}(b \theta_{xy})$  introduced by Saad and Sukhi \cite{Saadsukhi}. For  $a = 0$, we have: 
\be 
\widetilde{L}(0,b; \theta_{xy}) = {L}(b \theta_{xy}).
\ee 
The    definition (\ref{aqamat}) is motivated by the natural question of extending generating function, Mehler's formula and Roger's-type formula for a general form for the Hahn polynomials $ h_n(x,y,a,b|q)$.  The    operators (\ref{samat}) and (\ref{aqamat}) turn out to be suitable for dealing with  a generalized Cauchy polynomials $p_n(x,y,a)$  and   the generalized Hahn polynomials $h_n(x,y,a,b|q)$. They are  then applied in order to represent and investigate   some $q$-identities such as:  generating functions,   extended generating functions,  Mehler's formula and   Roger's-type formula for $p_n(x,y,a)$  and    $h_n(x,y,a,b|q)$ polynomials. 
 
\section{Generalized   Cauchy polynomials $ p_n(x,y,a)$}
In this section, we introduce a generalized   Cauchy polynomials $ p_n(x,y,a)$.
We then represent this   polynomials $ p_n(x,y,a)$ by means of the homogeneous
 $q$-difference operator    $\widetilde{E}(a,b;D_{q})$ and  derive their generating function.
We also use the operational formula for $ p_n(x,y,a)$
  to establish an extended generating function,  Mehler’s formula and   Rogers formula for the   Cauchy polynomials $ p_n(x,y,a)$. 

Let us start by the   the following Leibniz rule \cite{Roman1985}
\be
\label{Lieb}
D_q^n\left\{ f(x)g(x)\right\}=\sum_{k=0}^n \qbinomial{n}{k}{q} q^{k(k-n)} D_q^k\left\{f(x)\right\} D_q^{n-k}\left\{g(q^kx)\right\}.
\ee 
For $f(x)=x^k,\,  f(x)=(x t;q)_\infty$ and $\displaystyle f(x)= {}_{1}\Phi_1\left(\begin{array}{c}a
 \\
0
 \end{array}\Big|
q;bx\right),$ respectively,  we have the following identities:
\be
\label{rLieb}
D_q^n (x^k)= (q^{k-n+1};q)_n x^{k-n},\quad 
D_q^n(x t;q)_\infty = q^{({}^n_2)}t^n\frac{(x t;q)_\infty }{(x t;q)_n}
\ee
and
\be
\label{assertion}
D_q^n  {}_{1}\Phi_1\left(\begin{array}{c}a
 \\
0
 \end{array}\Big|
q;bx\right)= (-1)^nb^n (a;q)_nq^{({}^n_2)}{}_{1}\Phi_1\left(\begin{array}{c}aq^n
 \\
0
 \end{array}\Big|
q;bxq^n\right).
\ee
Suppose that the operator $ D_q$ acts on the variable $s$. If we set $f(s)=s^n,\,g(s)=s^m$ and  $\displaystyle f(s)={}_{1}\Phi_1\left(\begin{array}{c}a
 \\
0
 \end{array}\Big|
q;bsq^k\right),\,\, g(s)=\frac{1 }{(xsq^k;q)_\infty},$ respectively, and making used  (\ref{rLieb}) and (\ref{Lieb}), we get  the following identities to be used in the sequel: 
\be 
\Big( q^{n+m-k+1};q\Big)_k=
\sum_{j=0}^k \qbinomial{k}{j}{q}  q^{j(j-k+m)}  (q^{n -j+1};q)_j (q^{m -k+j+1};q)_{k-j}
\ee 
and
\be
 D_q^{n} \left\{\frac{{}_{1}\Phi_1\left(\begin{array}{c}a
 \\
0
 \end{array}\Big|
q;bsq^k\right)}{(xsq^k;q)_\infty} \right\}\label{ase}
\ee
\bea
 =\frac{  1}{(xs;q)_\infty} \sum_{j=0}^n \qbinomial{n}{j}{q} q^{j(j-n+k)+({}^j_2)} x^j  
  (xs;q)_jD_q^{n-j} {}_{1}\Phi_1\left(\begin{array}{c}a
 \\
0
 \end{array}\Big|
q;bsq^{j+k}\right). \nonumber
\eea 

 \begin{definition}
The  homogeneous $q$-difference operator 
  $\widetilde{E}(a,b;D_{q})$  is defined by
\be 
\label{aamat}
\widetilde{E}(a,b; {D}_{q})=\sum_{k=0}^\infty \frac{(-1)^kq^{({}^k_2)}\,(a;q)_k }{(q;q)_k}\, \left(b\,{D}_{q}\right)^k.
\ee
\end{definition}
Remark that, for $a=0,$ we have: 
\be
\widetilde{E}(0,b; {D}_{q})=R(bD_q).
\ee
\begin{proposition}
We suppose that the operator $ {D}_{q}$ acts on the variable $x$. We have:
\be 
\label{1A}
\widetilde{E}(a,y; {D}_{q})
\Bigg\{\frac{1}{(xt;q)_\infty}\Bigg\} =  \frac{1}{(xt;q)_\infty} \;  {}_{1}\Phi_1\left(\begin{array}{c}a
 \\
0
 \end{array}\Big|
q;yt\right),
\ee
\be 
\label{1A1}
\widetilde{E}(a,y; {D}_{q})
\Bigg\{\frac{1 }{(xt,xs;q)_\infty}\Bigg\} =
\ee
\be
\frac{1}{(xt,xs;q)_\infty}\sum_{n=0}^\infty \frac{(-1)^nq^{({}^n_2)}\,(a;q)_n(sy)^n }{(q;q)_n}\, {}_{2}\Phi_1\left(\begin{array}{c}  aq^n, xs
 \\
0
 \end{array}\Big|
q; ytq^{n}\right), \nonumber 
\ee
\be 
\label{1A2}
\widetilde{E}(a,y; {D}_{q})\Bigg\{\frac{x^k}{(xt;q)_\infty}\Bigg\}
=
\ee
\be
 \frac{x^k}{(xt;q)_\infty}\sum_{n=0}^\infty \frac{(-1)^nq^{({}^n_2)}\,(a;q)_n(ty)^n }{(q;q)_n}\, {}_{3}\Phi_2\left(\begin{array}{c}q^{-k}, aq^n, xt
 \\
0,0
 \end{array}\Big|
q;\frac{y}{x}q^{n+k}\right). \nonumber 
\ee 
\end{proposition}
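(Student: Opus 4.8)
The plan is to apply $\widetilde{E}(a,y;D_q)$ term by term in its defining series (\ref{aamat}), so that everything reduces to computing powers $D_q^N$ of the three base functions and then resumming against $\sum_{N}\frac{(-1)^N q^{\binom{N}{2}}(a;q)_N}{(q;q)_N}y^N(\,\cdot\,)$. The one computation underlying all three parts is the single-factor rule $D_q^N\{1/(xt;q)_\infty\}=t^N/(xt;q)_\infty$. I would derive it by expanding $1/(xt;q)_\infty=\sum_{m\ge0}(xt)^m/(q;q)_m$ via Euler's identity (\ref{q-expo-alpha}), applying $D_q^N(x^m)=(q^{m-N+1};q)_N x^{m-N}$ from (\ref{rLieb}) (whose factor $(q^{m-N+1};q)_N$ kills the terms with $m<N$, so the sum reindexes to start at $m=N$), and collapsing the remainder by (\ref{q-expo-alpha}) again. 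Substituting this into (\ref{aamat}) yields $\widetilde{E}(a,y;D_q)\{1/(xt;q)_\infty\}=\frac{1}{(xt;q)_\infty}\sum_{N\ge0}\frac{(-1)^N q^{\binom{N}{2}}(a;q)_N}{(q;q)_N}(yt)^N$, and comparing with (\ref{defe}) (where $r=s=1$, so the weight $1+s-r=1$ reproduces the factor $(-1)^N q^{\binom{N}{2}}$) identifies the series as ${}_1\Phi_1(a;0;q;yt)$; this proves (\ref{1A}).

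For (\ref{1A1}) and (\ref{1A2}) the new tool is the $q$-Leibniz rule (\ref{Lieb}), applied with order $N$ and summation index $j$. For (\ref{1A1}) I take $f(x)=1/(xt;q)_\infty$ and $g(x)=1/(xs;q)_\infty$; the single-factor rule gives $D_q^j\{f\}=t^j f$ and $D_q^{N-j}\{g(q^jx)\}=(q^js)^{N-j}(xs;q)_j/(xs;q)_\infty$, where I have used $1/(q^jxs;q)_\infty=(xs;q)_j/(xs;q)_\infty$. The crucial simplification is that the factor $q^{j(j-N)}$ supplied by (\ref{Lieb}) cancels the factor $q^{j(N-j)}$ coming from $(q^js)^{N-j}$, leaving $D_q^N\{1/(xt,xs;q)_\infty\}=\frac{1}{(xt,xs;q)_\infty}\sum_{j=0}^N\qbinomial{N}{j}{q}t^js^{N-j}(xs;q)_j$. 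For (\ref{1A2}) I instead take $f(x)=x^k$ and $g(x)=1/(xt;q)_\infty$, use $D_q^j(x^k)=(q^{k-j+1};q)_j x^{k-j}$ from (\ref{rLieb}), observe the same $q$-power cancellation, and factor $x^k$ out in front so that the summand carries $x^{-j}$ and $(q^{k-j+1};q)_j$.

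In both remaining cases the concluding step is to feed the finite sum back into (\ref{aamat}), interchange the two summations, and reindex by $n=N-j$ (i.e.\ $N=n+j$). Using $\binom{n+j}{2}=\binom{n}{2}+\binom{j}{2}+nj$, $(a;q)_{n+j}=(a;q)_n(aq^n;q)_j$ and $\qbinomial{N}{j}{q}=(q;q)_N/[(q;q)_j(q;q)_n]$, the double sum factors as an outer sum over $n$ carrying $\frac{(-1)^n q^{\binom{n}{2}}(a;q)_n}{(q;q)_n}$ times $(sy)^n$ for (\ref{1A1}) (resp.\ $(ty)^n$ for (\ref{1A2})), multiplied by an inner sum over $j$. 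For (\ref{1A1}) the inner sum is $\sum_{j\ge0}\frac{(-1)^j q^{\binom{j}{2}}(aq^n;q)_j(xs;q)_j}{(q;q)_j}(ytq^n)^j$, which is read off from (\ref{defe}) as the stated basic hypergeometric series in the argument $ytq^n$ with upper entries $aq^n,xs$. For (\ref{1A2}) the inner sum is $\sum_{j\ge0}\frac{(-1)^j q^{\binom{j}{2}}(aq^n;q)_j(q^{k-j+1};q)_j(xt;q)_j}{(q;q)_j}(y/x)^jq^{nj}$, and here the key manoeuvre is the reversal identity $(q^{k-j+1};q)_j=(-1)^jq^{kj-\binom{j}{2}}(q^{-k};q)_j$: it introduces the terminating numerator parameter $q^{-k}$, simultaneously cancels the residual $(-1)^jq^{\binom{j}{2}}$, and converts the argument into $\frac{y}{x}q^{n+k}$, producing exactly ${}_3\Phi_2(q^{-k},aq^n,xt;0,0;q;\frac{y}{x}q^{n+k})$ as in (\ref{1A2}).

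The main obstacle is entirely bookkeeping: tracking the several $q$-powers through the Leibniz expansion, the interchange, and the factorial splittings, and then matching the leftover weight $(-1)^j q^{\binom{j}{2}}$ against the correct exponent $1+s-r$ in (\ref{defe}) — in particular making sure the number of lower parameters recorded in each ${}_r\Phi_s$ is consistent with the weight actually present in the inner sum. As an independent check I would set $a=0$: then $(aq^n;q)_j=1$ and one may instead perform the $n$-summation first, where Euler's identity (\ref{q-Expo-alpha}) collapses $\sum_n \frac{(-1)^n q^{\binom{n}{2}}(syq^j)^n}{(q;q)_n}=(syq^j;q)_\infty$, recovering the known operator identity (\ref{v09}) for $R(yD_q)$ and thereby validating the $q$-power accounting in the general-$a$ computation.
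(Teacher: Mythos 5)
Your computation is correct, and it fills a genuine gap: the paper states this Proposition \emph{without proof} (the only accompanying text derives (\ref{1A}) from (\ref{1A2}) by setting $k=0$), so there is no argument in the paper to compare yours with line by line. Your route --- expand $\widetilde{E}(a,y;D_q)$ by its defining series (\ref{aamat}), reduce everything to $D_q^N$ of the base functions via $D_q^N\{1/(xt;q)_\infty\}=t^N/(xt;q)_\infty$ and the $q$-Leibniz rule (\ref{Lieb}) together with (\ref{rLieb}), then interchange the sums, reindex $N=n+j$, and identify the inner $j$-sum --- is exactly the toolkit the paper itself deploys in proving Theorem \ref{rede}, so in spirit you are on the authors' own path. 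The individual steps all check out: the cancellation of $q^{j(j-N)}$ against the factor $q^{j(N-j)}$ from $(q^js)^{N-j}$, the splittings $\binom{n+j}{2}=\binom{n}{2}+\binom{j}{2}+nj$ and $(a;q)_{n+j}=(a;q)_n(aq^n;q)_j$, and the reversal $(q^{k-j+1};q)_j=(-1)^jq^{kj-\binom{j}{2}}(q^{-k};q)_j$, which in (\ref{1A2}) absorbs the leftover weight $(-1)^jq^{\binom{j}{2}}$ and yields precisely the stated ${}_3\Phi_2$ (weight exponent $1+2-3=0$ in (\ref{defe}), consistent).

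One point you flag as ``bookkeeping'' should be stated as a finding rather than a caveat: for (\ref{1A1}) your inner sum is $\sum_{j\geq 0}(-1)^jq^{\binom{j}{2}}\frac{(aq^n;q)_j(xs;q)_j}{(q;q)_j}(ytq^n)^j$, and under the paper's own convention (\ref{defe}) this is ${}_2\Phi_2\bigl(aq^n,xs;0,0\,\big|\,q;ytq^n\bigr)$ (weight exponent $1+2-2=1$), \emph{not} the displayed ${}_2\Phi_1\bigl(aq^n,xs;0\,\big|\,q;ytq^n\bigr)$ (weight exponent $1+1-2=0$). In other words, your derivation is right and the paper's display (\ref{1A1}) is off by one zero lower parameter. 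Your own $a=0$ consistency check settles the matter: with the weight factor present, the double sum collapses via (\ref{q-Expo-alpha}) to the known identity (\ref{v09}), $\frac{(ys;q)_\infty}{(xt,xs;q)_\infty}\,{}_1\Phi_1(xs;ys\,|\,q;yt)$, whereas the literal ${}_2\Phi_1$ reading of (\ref{1A1}) would not reproduce it. The same one-parameter correction is then inherited by the Roger's-type formula (\ref{ule2}), whose right-hand side is copied from (\ref{1A1}).
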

 Setting $k=0$ in the assertion   (\ref{1A2}) and making used  $\displaystyle {}_{3}\Phi_2\left(\begin{array}{c}1, aq^n, xt
 \\
0,0
 \end{array}\Big|
q;\frac{y}{x}q^{n}\right)=1,$ we get  (\ref{1A}). 
\begin{theorem}
\label{rede}
We suppose that the operator $ {D}_{q}$ acts on the variable $s$. We have:
\be  
\label{ddredes}
\widetilde{E}(a,t; {D}_{q})\Bigg\{\frac{(ys;q)_\infty}{(xs;q)_\infty} \;  {}_{1}\Phi_1\left(\begin{array}{c}a
 \\
0
 \end{array}\Big|
q;bs\right)\Bigg\}
=\frac{ (ys;q)_\infty}{(xs;q)_\infty}   \sum_{n=0}^\infty\sum_{k=0}^\infty \sum_{j=0}^\infty\frac{  (-1)^{j+k}q^{ ({}^k_2)+({}^n_2)+({}^{n+j+k}_{\,\,\,\,\,\,2})}   }{  (q;q)_k(q;q)_{n}(q;q)_j}  \nonumber
\ee  
\be
 \times\,\frac{  (a;q)_n(a;q)_{n+j+k}  (xs;q)_k    }{ (ys;q)_k } 
  {}_{1}\Phi_1\left(\begin{array}{c}aq^n
 \\
0
 \end{array}\Big|
q;bsq^{j+k+n}\right) \,t^{n+j+k}.
\ee 
\end{theorem}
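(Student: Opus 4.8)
The plan is to start from the series definition (\ref{aamat}) of the operator, so that, with $N$ denoting the summation index and $D_q$ acting on $s$, the left-hand side reads $\sum_{N=0}^{\infty}\frac{(-1)^N q^{\binom{N}{2}}(a;q)_N}{(q;q)_N}\,t^N\,D_q^N\{F\}$, where $F(s)=\frac{(ys;q)_\infty}{(xs;q)_\infty}\,{}_1\Phi_1\big(a;0;q;bs\big)$. Everything then reduces to evaluating $D_q^N\{F\}$ in closed form; feeding the result back into the $N$-sum and relabelling indices should yield the stated triple series.

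To evaluate $D_q^N\{F\}$ I would first peel off the factor $(ys;q)_\infty$. Writing $F=(ys;q)_\infty\cdot h$ with $h(s)=\frac{{}_1\Phi_1(a;0;q;bs)}{(xs;q)_\infty}$ and applying the $q$-Leibniz rule (\ref{Lieb}), one gets $D_q^N\{F\}=\sum_{i=0}^{N}\qbinomial{N}{i}{q}q^{i(i-N)}\,D_q^i\{(ys;q)_\infty\}\,\big(D_q^{N-i}h\big)(q^i s)$, the substitution $s\mapsto q^i s$ in the second factor being exactly the shift prescribed by (\ref{Lieb}). Now $D_q^i\{(ys;q)_\infty\}$ is given by (\ref{rLieb}) as a multiple of $(ys;q)_\infty/(ys;q)_i$, while $\big(D_q^{N-i}h\big)(q^i s)$ is precisely the quantity computed in (\ref{ase}) with $n\mapsto N-i$ and $k\mapsto i$; this inner identity introduces the second summation index $j$, the factor $(xs;q)_j$, and reduces the problem to the derivatives $D_q^{N-i-j}\,{}_1\Phi_1$, which are furnished in closed form by (\ref{assertion}).

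Substituting these three ingredients, recognising the common prefactor $(ys;q)_\infty/(xs;q)_\infty$, and consolidating the bookkeeping is the rest of the argument: the two $q$-binomial coefficients together with the $1/(q;q)_N$ from the operator collapse to $1/\big((q;q)_n(q;q)_k(q;q)_j\big)$ through $\qbinomial{m}{\ell}{q}=(q;q)_m/\big((q;q)_\ell(q;q)_{m-\ell}\big)$; the accumulated powers of $q$ combine into $\binom{k}{2}+\binom{n}{2}+\binom{n+j+k}{2}$ (the $q$-Vandermonde-type identity displayed just before (\ref{ase}) being the natural tool for simplifying the resulting products of $q$-shifted factorials); and the two signs $(-1)^N$ from the operator and $(-1)^{N-i-j}$ from (\ref{assertion}) merge into $(-1)^{j+k}$. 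Setting $n:=N-i-j$ (the residual order of the ${}_1\Phi_1$-derivative, which is what leaves the top parameter $aq^n$ and the argument $bsq^{j+k+n}=bsq^N$) and renaming the remaining two indices casts the expression into the triple series of the theorem, with $t^{n+j+k}=t^N$.

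The routine steps are the three closed-form derivatives, quoted verbatim from (\ref{rLieb}), (\ref{assertion}) and (\ref{ase}). The genuine obstacle is the simultaneous tracking of the three telescoping exponents, the signs, and the monomial factors in $x,y,b$ through the nested Leibniz expansions, and then checking that the map $(i,j,N-i-j)\mapsto(k,j,n)$ is a bijection onto $\N^3$ so that all three indices may be summed freely from $0$ to $\infty$. A technically cleaner variant, which pairs $(xs;q)$ and $(ys;q)$ under a single index from the outset, is to first establish $D_q^p\big\{(ys;q)_\infty/(xs;q)_\infty\big\}=\frac{(ys;q)_\infty}{(xs;q)_\infty}\sum_{i=0}^{p}\qbinomial{p}{i}{q}q^{\binom{i}{2}}y^i x^{p-i}\frac{(xs;q)_i}{(ys;q)_i}$ (immediate from (\ref{Lieb}) together with the elementary relations $D_q^{p-i}\{1/(xs;q)_\infty\}=x^{p-i}/(xs;q)_\infty$ and the evaluation of $D_q^i\{(ys;q)_\infty\}$ in (\ref{rLieb})), apply (\ref{Lieb}) once to split the ratio from ${}_1\Phi_1$, and finish with (\ref{assertion}); this routes directly to the index configuration $k,j,n$ appearing in the statement.
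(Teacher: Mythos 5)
Your proposal follows essentially the same route as the paper's own proof: expand $\widetilde{E}(a,t;D_{q})$ as a series, split off $(ys;q)_\infty$ via the $q$-Leibniz rule (\ref{Lieb}), evaluate the resulting pieces with (\ref{rLieb}), (\ref{ase}) and (\ref{assertion}), and re-index so that the triple sum runs freely over $\N^3$, with the signs merging into $(-1)^{j+k}$ exactly as you describe. The only caveat---shared with the paper, whose final displayed line of the proof carries the monomial factors $b^{n}y^{k}x^{j}$ that then silently disappear from the printed statement of the theorem---is that your derivation likewise produces these factors, so the result you would obtain agrees with the paper's proof rather than with the statement exactly as typeset.
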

\begin{proof}
\be
\widetilde{E}(a,t; {D}_{q})\Bigg\{\frac{(ys;q)_\infty}{(xs;q)_\infty} \;  {}_{1}\Phi_1\left(\begin{array}{c}a
 \\
0
 \end{array}\Big|
q;bs\right)\Bigg\}\nonumber
\ee
\be
=\sum_{n=0}^\infty \frac{(-1)^nq^{({}^n_2)}\,t^n(a;q)_n }{(q;q)_n}\,  {D}_{q}^n
\Bigg\{\frac{(ys;q)_\infty}{(xs;q)_\infty} \;  {}_{1}\Phi_1\left(\begin{array}{c}a
 \\
0
 \end{array}\Big|
q;bs\right)\Bigg\}. \label{labs}
\ee 
By using (\ref{Lieb}),  the r.h.s. of (\ref{labs})  becomes
\be
\sum_{n=0}^\infty\frac{(-1)^nq^{({}^n_2)}\,t^n(a;q)_n}{(q;q)_n}  \sum_{k=0}^n \qbinomial{n}{k}{q} q^{k(k-n)}D_q^k(ys;q)_\infty
 D_q^{n-k} \left\{\frac{{}_{1}\Phi_1\left(\begin{array}{c}a
 \\
0
 \end{array}\Big|
q;bsq^k\right)}{(xsq^k;q)_\infty} \right\}\nonumber
\ee 
\be
= \sum_{n=0}^\infty\sum_{k=n}^\infty\frac{ (-1)^nq^{({}^n_2)+k(k-n)+({}^k_2)}\,t^n\,   y^k(a;q)_n }{(q;q)_k(q;q)_{n-k}} 
 (ysq^k;q)_\infty
 D_q^{n-k} \left\{\frac{{}_{1}\Phi_1\left(\begin{array}{c}a
 \\
0
 \end{array}\Big|
q;bsq^k\right)}{(xsq^k;q)_\infty} \right\}. \nonumber
\ee
Changing $n-k$ by $n$ in the last relation,  it becomes
\be
(ys;q)_\infty   \sum_{n=0}^\infty\sum_{k=0}^\infty\frac{  (-1)^{n+k}q^{({}^{n+k}_{\,\,\,2})-kn+({}^k_2)}\,t^{n+k}\,   y^k(a;q)_{n+k}   }{ (ys;q)_k(q;q)_k(q;q)_{n}} 
 D_q^{n} \left\{\frac{{}_{1}\Phi_1\left(\begin{array}{c}a
 \\
0
 \end{array}\Big|
q;bsq^k\right)}{(xsq^k;q)_\infty} \right\}.
  \label{aseeq}
\ee 
By using (\ref{ase}), the last relation takes the form
\be
 \frac{ (ys;q)_\infty}{(xs;q)_\infty}   \sum_{n=0}^\infty\sum_{k=0}^\infty \sum_{j=0}^n\frac{  (-1)^{n+k}q^{-kn+j(j-n+k)+({}^k_2)+({}^{n+k}_{\,\,\,2})} (a;q)_{n+k}  (xs;q)_k  y^k t^{n+k}  x^j  }{ (ys;q)_k(q;q)_k(q;q)_{n-j}(q;q)_j} 
\nonumber
\ee 
\be
\times
 D_q^{n-j} {}_{1}\Phi_1\left(\begin{array}{c}a
 \\
0
 \end{array}\Big|
q;bsq^{j+k}\right)\nonumber
\ee
\be
 =\frac{ (ys;q)_\infty}{(xs;q)_\infty}  \sum_{n=0}^\infty\sum_{k=0}^\infty \sum_{j=0}^\infty\frac{  (-1)^{n+j+k}q^{-kn-jn+({}^k_2)+({}^{n+j+k}_{\,\,\,\,\,\,2})} (a;q)_{n+j+k}  (xs;q)_k  y^k t^{n+j+k}  x^j  }{ (ys;q)_k(q;q)_k(q;q)_{n}(q;q)_j} 
\nonumber
\ee 
\be
\times\,
 D_q^{n} {}_{1}\Phi_1\left(\begin{array}{c}a
 \\
0
 \end{array}\Big|
q;bsq^{j+k}\right)\nonumber
\ee 
\be
 =\frac{ (ys;q)_\infty}{(xs;q)_\infty}   \sum_{n=0}^\infty\sum_{k=0}^\infty \sum_{j=0}^\infty\frac{  (-1)^{j+k}q^{ ({}^k_2)+({}^n_2)+({}^{n+j+k}_{\,\,\,\,\,\,2})} (a;q)_n(a;q)_{n+j+k}  (xs;q)_k   b^n y^k t^{n+j+k}  x^j  }{ (ys;q)_k(q;q)_k(q;q)_{n}(q;q)_j} 
\nonumber
\ee 
\be
\times\,
  {}_{1}\Phi_1\left(\begin{array}{c}aq^n
 \\
0
 \end{array}\Big|
q;bsq^{j+k+n}\right)  \nonumber
\ee 

Summarizing the above calculations,   we get the assertion (\ref{ddredes}). 
\end{proof}

\begin{definition}
In terms of  $q$-shifted factorial, we define a generalized      Cauchy polynomials $p_n(x,y,a)$ by
\be
\label{defR}
p_n(x,y,a)=\sum_{k=0}^n  \qbinomial{n}{k}{q}(-1)^kq^{\binom{k}{2}}\,(a;q)_{k}x^{n-k} y^k 
\ee
and 
\bea
\label{defRd}
p_n(0,y,a)= (-1)^nq^{\binom{n}{2}}\,(a;q)_{n}  y^n,\,  p_n(x,0,a)= x^{n },\, p_n(x,y,0)=\sum_{k=0}^n  \qbinomial{n}{k}{q}(-1)^kq^{\binom{k}{2}} x^{n-k} y^k.\nonumber
\eea
\end{definition}
The following operational formula holds true.
 \be
\label{sddefR}
\widetilde{E}(a,y; {D}_q)\Big\{x^n\Big\}=p_n(x,y,a).
\ee
Indeed, 
 \bea
\widetilde{E}(a,y; {D}_q)\Big\{x^n\Big\}&=& \sum_{k=0}^\infty \frac{(-1)^kq^{({}^k_2)}\,(a;q)_k }{(q;q)_k}\, \left(y\,{D}_{q}\right)^k\Big\{x^n\Big\}\cr
&=&\sum_{k=0}^\infty \frac{(-1)^kq^{({}^k_2)}\,(a;q)_k }{(q;q)_k}\frac{ (q;q)_n }{(q;q)_{n-k}}\, y^k x^{n-k} =p_n(x,y,a).
\eea
\begin{theorem}(Generating function for $p_n(x,y,a)$)
\label{csobbvrro}
The following generating function holds true.
\be
\label{aaGlnene}
 \sum_{n=0}^\infty p_n(x,y,a) \frac{t^n}{(q;q)_n}= \frac{1}{(xt;q)_\infty} \;  {}_{1}\Phi_1\left(\begin{array}{c}a
 \\
0
 \end{array}\Big|
q;yt\right),
\ee
where $|xt| <1.$
\end{theorem}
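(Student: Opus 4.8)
The plan is to combine the operational representation (\ref{sddefR}) of the generalized Cauchy polynomials with the action of $\widetilde{E}(a,y;D_q)$ on the kernel $1/(xt;q)_\infty$ already computed in the Proposition. First I would expand the kernel by Euler's identity (\ref{q-expo-alpha}), writing
\be
\frac{1}{(xt;q)_\infty}=\sum_{n=0}^\infty\frac{(xt)^n}{(q;q)_n}=\sum_{n=0}^\infty\frac{x^n}{(q;q)_n}\,t^n,
\ee
which converges for $|xt|<1$, exactly the hypothesis of the theorem.

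Next, I would apply the operator $\widetilde{E}(a,y;D_q)$, acting on the variable $x$, to both sides of this expansion. Since $t$ is a scalar with respect to this action, the operator passes through the $t$-series term by term, and using the operational formula (\ref{sddefR}), namely $\widetilde{E}(a,y;D_q)\big\{x^n\big\}=p_n(x,y,a)$, the right-hand side becomes $\sum_{n=0}^\infty p_n(x,y,a)\,t^n/(q;q)_n$. On the other hand, applying $\widetilde{E}(a,y;D_q)$ directly to the closed form $1/(xt;q)_\infty$ is precisely the content of assertion (\ref{1A}), which yields $\dfrac{1}{(xt;q)_\infty}\,{}_{1}\Phi_1\!\left(\begin{array}{c}a\\0\end{array}\Big|q;yt\right)$. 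Equating these two evaluations of one and the same expression gives the claimed generating function (\ref{aaGlnene}).

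The only delicate point, and the place where I expect to spend the most care, is justifying the interchange of the defining series of $\widetilde{E}(a,y;D_q)$ (an infinite series in powers of $D_q$) with the infinite $t$-expansion of the kernel. I would handle this by observing that within the region $|xt|<1$ the action of each $D_q^k$ on $x^n$ produces an absolutely convergent double series in $n$ and $k$, so a Fubini-type rearrangement is legitimate; equivalently, one may read (\ref{aaGlnene}) as an identity of formal power series in $t$ whose coefficients agree term by term, the agreement being exactly (\ref{sddefR}) together with the term-by-term computation underlying (\ref{1A}). With the operational formula and the Proposition already established, the result then follows essentially immediately.
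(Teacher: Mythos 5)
Your proposal is correct and is essentially the paper's own proof: both use the operational formula $\widetilde{E}(a,y;D_q)\{x^n\}=p_n(x,y,a)$, pass the operator through the Euler expansion $\sum_n (xt)^n/(q;q)_n = 1/(xt;q)_\infty$, and then invoke assertion (\ref{1A}) to evaluate the operator on the closed-form kernel. The only difference is presentational (you run the chain of equalities in the opposite direction and add a justification of the series interchange, which the paper takes for granted).
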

\begin{proof} We suppose that the operator $ {D}_{q}$ acts on the variable $x$.  Then, 
\bea 
 \sum_{n=0}^\infty p_n(x,y,a) \frac{t^n}{(q;q)_n}=\sum_{n=0}^\infty \widetilde{E}(a,y; {D}_{q})\Big\{x^n\Big\}\frac{t^n}{(q;q)_n}&= &\widetilde{E}(a,y; {D}_{q})\Bigg\{\sum_{n=0}^\infty  \frac{(xt)^n}{(q;q)_n}\Bigg\}\cr
 &=&\widetilde{E}(a,y; {D}_{q})\Bigg\{ \frac{1}{(xt;q)_\infty}\Bigg\}\cr
&=& \frac{1}{(xt;q)_\infty} \;  {}_{1}\Phi_1\left(\begin{array}{c}a
 \\
0
 \end{array}\Big|
q;yt\right)\nonumber
\eea
which evidently completes the proof of assertion (\ref{aaGlnene}). 
\end{proof}


\begin{theorem}(Extended generating function for $p_n(x,y,a)$)
\label{orro}
We have:
\be
\label{ggdene}
 \sum_{n=0}^\infty p_{n+k}(x,y,a) \frac{t^n}{(q;q)_n}
\ee
\be
 \frac{x^k}{(xt;q)_\infty}\sum_{n=0}^\infty \frac{(-1)^nq^{({}^n_2)}\,(a;q)_n(ty)^n }{(q;q)_n} {}_{3}\Phi_2\left(\begin{array}{c}q^{-k}, aq^n, xt
 \\
0,0
 \end{array}\Big|
q;\frac{y}{x}q^{n+k}\right), \nonumber 
\ee 
where $|xt| <1.$
\end{theorem}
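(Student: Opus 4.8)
The plan is to reduce this extended generating function to a single application of the operational formula (\ref{sddefR}) followed by the Proposition identity (\ref{1A2}) already established above. Throughout I would regard ${D}_q$ as acting on the variable $x$, so that $p_{n+k}(x,y,a)=\widetilde{E}(a,y;{D}_q)\{x^{n+k}\}$ holds for every $n\geq 0$. Substituting this into the left-hand side converts the generating series into the action of one operator on an elementary power series:
\be
\sum_{n=0}^\infty p_{n+k}(x,y,a)\frac{t^n}{(q;q)_n}=\sum_{n=0}^\infty \widetilde{E}(a,y;{D}_q)\big\{x^{n+k}\big\}\frac{t^n}{(q;q)_n}.
\ee

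Since $\widetilde{E}(a,y;{D}_q)$ acts only in $x$ and is independent of the summation index $n$, I would interchange the operator with the sum and factor out $x^k$, so that the quantity inside the operator braces becomes $x^k\sum_{n\geq 0}(xt)^n/(q;q)_n$. Euler's identity (\ref{q-expo-alpha}) then collapses this inner sum to $x^k/(xt;q)_\infty$, and the whole expression reduces to $\widetilde{E}(a,y;{D}_q)\{x^k/(xt;q)_\infty\}$. The assertion (\ref{1A2}) evaluates precisely this operator image, and its right-hand side is verbatim the claimed ${}_3\Phi_2$-expansion, which completes the argument.

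The only step that genuinely requires attention is the term-by-term interchange of the $q$-difference operator $\widetilde{E}$ with the infinite series in $t$. This is where the hypothesis $|xt|<1$ enters: it guarantees convergence of $\sum_{n\geq 0}(xt)^n/(q;q)_n$ to $1/(xt;q)_\infty$ and legitimizes applying $\widetilde{E}$ summand by summand and resumming without altering the value. Once this formal manipulation is justified, no computation remains, since everything else is a direct appeal to (\ref{sddefR}), (\ref{q-expo-alpha}) and (\ref{1A2}).
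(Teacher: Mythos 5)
Your proposal is correct and follows essentially the same route as the paper's own proof: represent $p_{n+k}(x,y,a)$ via the operational formula $\widetilde{E}(a,y;D_q)\{x^{n+k}\}$, interchange the operator with the sum, collapse $\sum_{n\geq 0}(xt)^n/(q;q)_n$ to $1/(xt;q)_\infty$ by Euler's identity, and invoke (\ref{1A2}). The only difference is cosmetic: the paper's proof (apparently by a typo) declares that $D_q$ acts on the variable $s$, whereas your choice of the variable $x$ is the correct one for the operational formula to apply.
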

\begin{proof} We suppose that the operator $ {D}_{q}$ acts on the variable $s$. Then, we have: 
\bea 
\sum_{n=0}^\infty  p_{n+k}(x,y,a) \frac{t^n}{(q;q)_n}&=&\sum_{n=0}^\infty \widetilde{E}(a,y; {D}_{q})\Big\{x^{n+k}\Big\}\frac{t^n}{(q;q)_n} 
= \widetilde{E}(a,y; {D}_{q})\Bigg\{x^k\sum_{n=0}^\infty  \frac{(xt)^n}{(q;q)_n}\Bigg\} \cr
&=&\widetilde{E}(a,y; {D}_{q})\Bigg\{\frac{x^k}{(xt;q)_\infty}\Bigg\}.\nonumber
\eea
The proof of assertion (\ref{ggdene})  is achieved by using the relation (\ref{1A2}).  
\end{proof}
Setting $k=0$ in  Theorem \ref{orro}, we get the generating function (\ref{aaGlnene}).  

\begin{theorem}(Roger's-Type formula for $p_n(x,y,a)$)
We have:
\be 
\label{ule2}
\sum_{n=0}^\infty\sum_{m=0}^\infty p_{n+m}(x,y,a)    \frac{  t^n}{(q;q)_n}  \frac{  s^m}{(q;q)_m}=
\ee 
\be
\frac{1}{(xt,xs;q)_\infty}\sum_{n=0}^\infty \frac{(-1)^nq^{({}^n_2)}\,(a;q)_n(sy)^n }{(q;q)_n} {}_{2}\Phi_1\left(\begin{array}{c}  aq^n, xs
 \\
0
 \end{array}\Big|
q; ytq^{n}\right) \nonumber 
\ee
where $max\{|xt|, |xs|\} <1.$
\end{theorem}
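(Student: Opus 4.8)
The plan is to mirror the operator method already used for the generating function (Theorem \ref{csobbvrro}) and its extended form (Theorem \ref{orro}), now applied to a double series. The starting point is the operational representation (\ref{sddefR}), namely $\widetilde{E}(a,y;D_q)\{x^n\}=p_n(x,y,a)$, with the operator acting on the variable $x$. First I would replace each generalized Cauchy polynomial by its operator image, writing $p_{n+m}(x,y,a)=\widetilde{E}(a,y;D_q)\{x^{n+m}\}$ and substituting this into the left-hand side of (\ref{ule2}).

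By linearity of $\widetilde{E}(a,y;D_q)$ and the factorization $x^{n+m}=x^n\,x^m$, I would pull the operator outside the double summation, so that the left-hand side becomes
\be
\widetilde{E}(a,y;D_q)\Bigg\{\Bigg(\sum_{n=0}^\infty \frac{(xt)^n}{(q;q)_n}\Bigg)\Bigg(\sum_{m=0}^\infty \frac{(xs)^m}{(q;q)_m}\Bigg)\Bigg\}.
\ee
Applying Euler's identity (\ref{q-expo-alpha}) to each factor collapses the argument to the product $1/(xt;q)_\infty\cdot 1/(xs;q)_\infty = 1/(xt,xs;q)_\infty$, whence the whole expression reduces to
\be
\widetilde{E}(a,y;D_q)\Bigg\{\frac{1}{(xt,xs;q)_\infty}\Bigg\}.
\ee

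At this point the result follows at once from the operational identity (\ref{1A1}), which evaluates precisely this action and produces the claimed right-hand side of (\ref{ule2}). The only genuine obstacle is analytic rather than algebraic: since $\widetilde{E}(a,y;D_q)$ is itself an infinite series in the $q$-derivative $D_q$, interchanging it with the infinite double sum requires justification. The hypothesis $\max\{|xt|,|xs|\}<1$ secures absolute convergence of the two Euler sums and of the resulting ${}_2\Phi_1$ series, which legitimizes the termwise manipulation and completes the argument.
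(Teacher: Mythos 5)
Your proposal is correct and follows essentially the same route as the paper's own proof: replace $p_{n+m}(x,y,a)$ by $\widetilde{E}(a,y;D_{q})\{x^{n+m}\}$, pull the operator out of the double sum, collapse the two Euler series to $1/(xt,xs;q)_\infty$, and invoke the operational identity (\ref{1A1}). Your added remark justifying the interchange of the operator with the double sum under $\max\{|xt|,|xs|\}<1$ is a point the paper leaves implicit.
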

 \begin{proof} We suppose that the operator $ {D}_{q}$ acts on the variable $x$. We have:
\be 
 \sum_{n=0}^\infty\sum_{m=0}^\infty p_{n+m}(x,y,a)    \frac{  t^n}{(q;q)_n}  \frac{  s^m}{(q;q)_m}
=  
\sum_{n=0}^\infty\sum_{m=0}^\infty \widetilde{E}(a,y; {D}_{q})\Big\{x^{n+m}\Big\}\frac{t^n}{(q;q)_n}  \frac{  s^m}{(q;q)_m}\nonumber \ee
\be 
= \widetilde{E}(a,y; {D}_{q})\Bigg\{\sum_{n=0}^\infty  \frac{(xt)^n}{(q;q)_n}\sum_{n=0}^\infty  \frac{(xs)^m}{(q;q)_m}\Bigg\}
=\widetilde{E}(a,y; {D}_{q})\Bigg\{ \frac{1}{(xt,xs;q)_\infty} \Bigg\}.\nonumber
\ee 
 The proof is achieved by using (\ref{1A1}).
\end{proof}
Now, we aim to present an operator approach to Mehler's formula for the generalized Cauchy polynomials  $p_n(x,y,a)$.
\begin{theorem}(Mehler's  formula for  $p_n(x,y,a)$)
We have:
\be 
\label{mehl}
\sum_{n=0}^\infty p_n(x,y,a) p_n(u,v,\alpha)   \frac{  t^n}{(q;q)_n}= \widetilde{E}(a,y; {D}_{q})\Bigg\{ \frac{1}{(uxt;q)_\infty}    \;  {}_{1}\Phi_1\left(\begin{array}{c}\alpha
 \\
0
 \end{array}\Big|
q;vxt\right) \Bigg\}  .  
\ee 
 
\end{theorem}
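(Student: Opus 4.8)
The plan is to run the same operator argument that proved Theorems~\ref{csobbvrro} and~\ref{orro}: replace one polynomial factor by the operator $\widetilde{E}(a,y;D_q)$ applied to a monomial, interchange the operator with the outer summation, and then collapse the remaining series by means of the generating function (\ref{aaGlnene}). Throughout I take $D_q$ to act on the variable $x$.

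First I would invoke the operational formula (\ref{sddefR}), namely $\widetilde{E}(a,y;D_q)\{x^n\}=p_n(x,y,a)$, to rewrite the left-hand side of (\ref{mehl}) as
\[
\sum_{n=0}^\infty \widetilde{E}(a,y;D_q)\big\{x^n\big\}\,p_n(u,v,\alpha)\,\frac{t^n}{(q;q)_n}.
\]
Because $\widetilde{E}(a,y;D_q)$ is linear and operates only on $x$, while the coefficients $p_n(u,v,\alpha)\,t^n/(q;q)_n$ are independent of $x$, I would pull the operator outside the sum and fold $t^n$ into $x^n$, obtaining
\[
\widetilde{E}(a,y;D_q)\Bigg\{\sum_{n=0}^\infty p_n(u,v,\alpha)\,\frac{(xt)^n}{(q;q)_n}\Bigg\}.
\]
The bracketed series is exactly the generating function (\ref{aaGlnene}) of Theorem~\ref{csobbvrro} with $t$ replaced by $xt$ and with parameters $(u,v,\alpha)$, so it sums to $\dfrac{1}{(uxt;q)_\infty}\,{}_{1}\Phi_1\!\left(\begin{array}{c}\alpha\\0\end{array}\Big|q;vxt\right)$. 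Substituting this back reproduces the right-hand side of (\ref{mehl}) verbatim, and no further computation is required.

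The one point that needs care --- and which I expect to be the main obstacle --- is the interchange of $\widetilde{E}(a,y;D_q)$, itself an infinite series in powers of $D_q$, with the infinite summation over $n$. I would justify this term-by-term manipulation exactly as in the proofs of the earlier theorems, using the standing hypothesis that $|xt|<1$ (so that $\sum_n (xt)^n/(q;q)_n=1/(xt;q)_\infty$ converges absolutely) together with the entire-function character of the ${}_{1}\Phi_1$ factor in the variable $vxt$; these together guarantee that applying the operator before or after summation yields the same result.
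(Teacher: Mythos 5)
Your proposal is correct and follows essentially the same route as the paper's own proof: rewrite $p_n(x,y,a)$ as $\widetilde{E}(a,y;D_q)\{x^n\}$ via (\ref{sddefR}), exchange the operator with the sum, and evaluate $\sum_{n=0}^\infty p_n(u,v,\alpha)\,(xt)^n/(q;q)_n$ by the generating function (\ref{aaGlnene}) with $t$ replaced by $xt$. Your added remarks on justifying the interchange of the operator with the infinite sum go beyond what the paper records, but do not change the argument.
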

\begin{proof}
\be 
 \sum_{n=0}^\infty p_n(x,y,a) p_n(u,v,\alpha)    \frac{  t^n}{(q;q)_n}
=  
\sum_{n=0}^\infty \widetilde{E}(a,y; {D}_{q})\Big\{x^{n}\Big\} p_n(u,v,\alpha)\frac{t^n}{(q;q)_n}= \nonumber \ee
\be 
\widetilde{E}(a,y; {D}_{q})\Bigg\{\sum_{n=0}^\infty    p_n(u,v,\alpha)\frac{(xt)^n}{(q;q)_n}  \Bigg\}
= \widetilde{E}(a,y; {D}_{q})\Bigg\{ \frac{1}{(uxt;q)_\infty}    \;  {}_{1}\Phi_1\left(\begin{array}{c}\alpha
 \\
0
 \end{array}\Big|
q;vxt\right) \Bigg\}. \nonumber\ee
 
\end{proof}
For $y=0$ and $t=y$, the assertion  (\ref{ddredes}) is reduced to the Mehler's  formula for  $p_n(x,y,a)$  (\ref{mehl}). 
\section{Homogeneous  $q$-difference   operator $\widetilde{L}(a,b;\theta_{xy})$ and generalized Hahn polynomials}
In this section, we define  a  generalized Hahn polynomials $ h_n(x,y,a,b|q)$  in terms of homogeneous  $q$-shift operator  $\widetilde{L}(a,b; \theta_{xy})$. An extended generating function,  Mehler’s formula and   Rogers formula for the generalized Hahn polynomials $ h_n(x,y,a,b|q)$ are given. 

Let us recall the definition of the usual operator 

We now define the homogeneous Cauchy q-shift operator as follows.
\begin{definition}
The homogeneous  $q$-difference operator $\widetilde{L}(a,b;\theta_{xy})$ is defined  by
\be 
\label{mat}
\widetilde{L}(a,b; \theta_{xy})=\sum_{k=0}^\infty \frac{q^{({}^k_2)}\,(a;q)_k }{(q;q)_k}\, \left(b\,\theta_{xy}\right)^k.
\ee 
\end{definition}
 By means of $q$-hypergeometric series,   the operator (\ref{mat}) can be written as:
\be 
\widetilde{L}(a,b; \theta_{xy})= {}_{1}\Phi_1\left(\begin{array}{c}a
 \\
0
 \end{array}\Big|
q;-b\theta_{xy}\right).
\ee 
\begin{theorem}
\label{lasted}
The following operational formula holds true:
\be 
\label{asrule2}
\widetilde{L}(a,b; \theta_{xy})\Bigg\{\frac{(xt;q)_\infty}{(yt;q)_\infty}\Bigg\} = \frac{(xt;q)_\infty}{(yt;q)_\infty}\,{}_{1}\Phi_1\left(\begin{array}{c}a
 \\
0
 \end{array}\Big|
q; bt\right).
\ee
\end{theorem}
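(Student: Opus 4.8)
The plan is to exploit the fact that the ratio $(xt;q)_\infty/(yt;q)_\infty$ behaves as an eigenfunction of the operator $\theta_{xy}$. Indeed, the second relation in (\ref{deule2}) records precisely that
\[
\theta_{xy}^{k}\Bigg\{\frac{(xt;q)_\infty}{(yt;q)_\infty}\Bigg\}=(-t)^{k}\,\frac{(xt;q)_\infty}{(yt;q)_\infty},
\]
so each power $\theta_{xy}^{k}$ merely reproduces the ratio up to the scalar $(-t)^{k}$. First I would substitute the defining series (\ref{mat}) of $\widetilde{L}(a,b;\theta_{xy})$ and push the operator inside, letting it act on the ratio term by term.

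Applying the eigenfunction identity to each summand and using $(b\,\theta_{xy})^{k}=b^{k}\theta_{xy}^{k}$, the whole expression collapses, since $b^{k}(-t)^{k}=(-bt)^{k}$, to
\[
\frac{(xt;q)_\infty}{(yt;q)_\infty}\sum_{k=0}^{\infty}\frac{q^{\binom{k}{2}}\,(a;q)_{k}}{(q;q)_{k}}\,(-bt)^{k},
\]
where the ratio factors out of the sum because it no longer depends on the summation index $k$.

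The final step is to recognize the remaining series as a ${}_{1}\Phi_1$. Using the general definition (\ref{defe}) with $r=s=1$, the prefactor exponent is $1+s-r=1$, and the lower parameter $b_{1}=0$ gives $(0;q)_{k}=1$; hence
\[
{}_{1}\Phi_1\left(\begin{array}{c}a\\0\end{array}\Big|q;bt\right)=\sum_{k=0}^{\infty}(-1)^{k}q^{\binom{k}{2}}\,\frac{(a;q)_{k}}{(q;q)_{k}}\,(bt)^{k}=\sum_{k=0}^{\infty}\frac{q^{\binom{k}{2}}\,(a;q)_{k}}{(q;q)_{k}}\,(-bt)^{k},
\]
which is exactly the series produced above. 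Multiplying back by the factored ratio yields (\ref{asrule2}).

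The computation itself is short; the only genuine point requiring care is the interchange of the infinite operator-series in (\ref{mat}) with the action on the ratio, that is, justifying the termwise application of $\theta_{xy}^{k}$ and the subsequent factoring of $(xt;q)_\infty/(yt;q)_\infty$. For $0<q\le 1$ and $|bt|$ suitably small the Gaussian factor $q^{\binom{k}{2}}$ forces absolute convergence, so the formal manipulation is legitimate; I would record this convergence condition explicitly rather than treat the identity purely formally.
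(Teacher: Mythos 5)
Your proof is correct and follows essentially the same route as the paper's: both substitute the defining series for $\widetilde{L}(a,b;\theta_{xy})$, apply the eigenfunction identity $\theta_{xy}^{k}\{(xt;q)_\infty/(yt;q)_\infty\}=(-t)^{k}(xt;q)_\infty/(yt;q)_\infty$ from (\ref{deule2}) termwise, and resum the resulting series as the ${}_{1}\Phi_1$ in (\ref{asrule2}). Your added remark on absolute convergence is a reasonable refinement the paper omits, but it does not change the argument.
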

\begin{proof}
Let $f_{q}(x,y;a,b;t)$ be the function defined as
\be 
f_{q}(x,y;a,b;t)=\widetilde{L}(a,b; \theta_{xy})\Bigg\{ \frac{(xt;q)_\infty}{(yt;q)_\infty}\Bigg\}.  \nonumber
\ee
Using (\ref{deule2}) and  (\ref{mat}), respectively, we have
\bea
 \label{ale1}
f_{q}(x,y;a,b;t)&=&\sum_{k=0}^\infty\frac{  q^{({}^k_2)}\,(a;q)_k\,b^k }{(q;q)_k}\,  \theta_{xy}^k\Bigg\{ \frac{(xt;q)_\infty}{(yt;q)_\infty}\Bigg\} 
\cr
&=&\frac{(xt;q)_\infty}{(yt;q)_\infty}\sum_{k=0}^\infty \frac{ (-1)^k q^{({}^k_2)}\,(a;q)_k }{(q;q)_k}\,(bt)^k=\frac{(xt;q)_\infty}{(yt;q)_\infty} \;  {}_{1}\Phi_1\left(\begin{array}{c}a
 \\
0
 \end{array}\Big|
q;bt\right), \nonumber
\eea 
which completes the  proof of assertion (\ref{asrule2}). 
\end{proof}
\begin{definition}
In terms of the q-shifted factorial and the Cauchy polynomials $p_n(x,y)$,  we write
\be
\label{defR}
h_n(x,y,a,b|q)=\sum_{k=0}^n  \qbinomial{n}{k}{q} (-1)^{k} q^{\binom{k}{2}}\,b^{k}\,(a;q)_{k}\, p_{n-k}(y,x).
\ee
\end{definition}
Remark that, for $a=0$ and $b=z$, the  $q$-polynomials $h_n(x,y,a,b|q)$ are the well known trivariate $q$-polynomials $F_n(x,y,z;q)$ investigated by Mohammed (see \cite{Mohammed} for more details), i.e. 
\be
h_n(x,y,0,z|q) = (-1)^{n} q^{\binom{n}{2}}F_n(x,y,z;q).
\ee
The  $q$-polynomials $h_n(a,b,x,y|q)$ is a general form of Hahn polynomials $\psi_n^{(a)}(x|q)$
according   when we are setting $a=0,\,b=1$ and $y = ax$. Also, if we let $a=0, y = ax$ and $b=y$, we get the second Hahn polynomials $\psi_n^{(a)}(x,y|q)$.  Hence we have the following special substitutions:
\be
h_n(x,ax,0,1|q) = (-1)^{n} q^{\binom{n}{2}}\psi_n^{(a)}(x|q)\mbox{ and }  h_n(x,ax,0,y|q) = (-1)^{n} q^{\binom{n}{2}}\psi_n^{(a)}(x,y|q).
\ee
The polynomials (\ref{defR}) can be represented by the homogeneous $q$-difference operator (\ref{mat}) as follows:
 \be
\label{sddefR}
\widetilde{L}(a,b; \theta_{xy})\Big\{p_n(y,x)\Big\}=h_n(x,y,a,b|q).
\ee
Indeed, 
 \bea
\widetilde{L}(a,b; \theta_{xy})\Big\{p_n(y,x)\Big\}&=& \sum_{k=0}^\infty \frac{ q^{({}^k_2)}\,(a;q)_k }{(q;q)_k}\, \left(b\,\theta_{xy}\right)^k\Big\{p_n(y,x)\Big\}\cr
&=&\sum_{k=0}^\infty \frac{ q^{({}^k_2)}\,(a;q)_k \, b^k}{(q;q)_k}\,(-1)^k\frac{ (q;q)_n }{(q;q)_{n-k}} p_{n-k}(y,x) =h_n(x,y,a,b|q).  \nonumber
\eea
\begin{theorem}(Generating function for $h_n(x,y,a,b|q)$)
\label{csorro}
The following generating function holds for the $q$-polynomials $h_n(a,b,x,y|q)$:
\be
\label{aaGene}
 \sum_{n=0}^\infty h_n(x,y,a,b,|q) \frac{t^n}{(q;q)_n}= \frac{(xt;q)_\infty}{(yt;q)_\infty} \;  {}_{1}\Phi_1\left(\begin{array}{c}a
 \\
0
 \end{array}\Big|
q;bt\right).
\ee
\end{theorem}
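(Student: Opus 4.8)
The plan is to establish~(\ref{aaGene}) in direct analogy with the companion result Theorem~\ref{csobbvrro}, exploiting the operator representation of the generalized Hahn polynomials rather than manipulating their defining sum~(\ref{defR}) term by term. The point of departure is the operational identity~(\ref{sddefR}), which writes each polynomial as $h_n(x,y,a,b|q)=\widetilde{L}(a,b;\theta_{xy})\{p_n(y,x)\}$. Substituting this into the left-hand side of~(\ref{aaGene}), I would first replace $h_n(x,y,a,b|q)$ by $\widetilde{L}(a,b;\theta_{xy})\{p_n(y,x)\}$ underneath the summation sign.

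The key step is then to interchange the homogeneous $q$-difference operator $\widetilde{L}(a,b;\theta_{xy})$ with the infinite sum over $n$. Because $\widetilde{L}$ is a (formal) linear operator acting only on the variables $x$ and $y$, while $t^n/(q;q)_n$ is merely a scalar coefficient, pulling the operator outside the series yields
\[
\sum_{n=0}^\infty h_n(x,y,a,b|q)\frac{t^n}{(q;q)_n}
=\widetilde{L}(a,b;\theta_{xy})\Bigg\{\sum_{n=0}^\infty p_n(y,x)\frac{t^n}{(q;q)_n}\Bigg\}.
\]
The bracketed inner sum is precisely the homogeneous Cauchy generating function~(\ref{gener}) with its two arguments interchanged, so that $\sum_{n=0}^\infty p_n(y,x)\,t^n/(q;q)_n=(xt;q)_\infty/(yt;q)_\infty$ whenever $|yt|<1$.

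Having reduced the bracketed expression to the quotient $(xt;q)_\infty/(yt;q)_\infty$, the proof is completed in a single stroke by invoking Theorem~\ref{lasted}: the operational formula~(\ref{asrule2}) evaluates $\widetilde{L}(a,b;\theta_{xy})\{(xt;q)_\infty/(yt;q)_\infty\}$ as $\frac{(xt;q)_\infty}{(yt;q)_\infty}\,{}_{1}\Phi_1\left(\begin{array}{c}a \\ 0 \end{array}\Big| q;bt\right)$, which is exactly the right-hand side of~(\ref{aaGene}). Thus the entire argument is a two-ingredient composition: the operator representation~(\ref{sddefR}) converts the Hahn generating series into the action of $\widetilde{L}$ on the Cauchy generating series, and Theorem~\ref{lasted} then evaluates that action explicitly.

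The only point demanding genuine care—and which I regard as the main obstacle—is the justification of the term-by-term interchange of the operator $\widetilde{L}(a,b;\theta_{xy})$ with the infinite series in $n$. Formally it is immediate from linearity, but to make it rigorous one must verify that the resulting double series, obtained by expanding $\widetilde{L}$ via~(\ref{mat}) and composing it with the Cauchy generating series, converges absolutely in a suitable region. This is precisely where the standing hypothesis $0<q\le 1$, together with the condition $|yt|<1$ ensuring convergence of the inner Cauchy series, guarantees that both the generating function and the action of $\widetilde{L}$ are well defined and that the Fubini-type rearrangement is legitimate.
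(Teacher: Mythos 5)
Your proposal is correct and follows essentially the same route as the paper's own proof: substitute the operational representation (\ref{sddefR}) into the series, pull $\widetilde{L}(a,b;\theta_{xy})$ outside the sum, recognize the Cauchy generating function $(xt;q)_\infty/(yt;q)_\infty$, and finish with Theorem \ref{lasted}. Your additional remark on justifying the operator--sum interchange is a point the paper glosses over, but it does not change the argument.
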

\begin{proof} We observe that:
\be 
\sum_{n=0}^\infty  h_n(x,y,a,b|q) \frac{t^n}{(q;q)_n}=\sum_{n=0}^\infty \widetilde{L}(a,b; \theta_{xy})\Big\{p_n(y,x)\Big\}\frac{t^n}{(q;q)_n} 
\nonumber
\ee 
\be
= \widetilde{L}(a,b; \theta_{xy})\Bigg\{\sum_{n=0}^\infty p_n(y,x)\frac{t^n}{(q;q)_n}\Bigg\} =\widetilde{L}(a,b;\theta_{xy})\Bigg\{ \frac{(xt;q)_\infty}{(yt;q)_\infty}\Bigg\}
\nonumber
\ee 
\be
= \frac{(xt;q)_\infty}{(yt;q)_\infty} \;  {}_{1}\Phi_1\left(\begin{array}{c}a
 \\
0
 \end{array}\Big|
q;bt\right)\nonumber
\ee 
which evidently completes the proof of assertion (\ref{aaGene}). 
\end{proof}

\begin{theorem}(Extended generating function for $h_n(x,y,a,b|q)$)
\label{csorro}
We have:
\be
\label{aqaGene}
 \sum_{n=0}^\infty h_{n+k}(x,y,a,b|q) \frac{t^n}{(q;q)_n}=\widetilde{L}(a,b; \theta_{xy})\Bigg\{\frac{p_k(y,x) (xt;q)_\infty}{ (xt;q)_k(yt;q)_\infty}\Bigg\}.
\ee
\end{theorem}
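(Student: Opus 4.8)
The plan is to proceed exactly as in the proof of Theorem \ref{orro}, by first replacing $h_{n+k}$ with its operator representation and then pulling the operator through the summation. Starting from the operational formula (\ref{sddefR}), namely $\widetilde{L}(a,b;\theta_{xy})\{p_n(y,x)\}=h_n(x,y,a,b|q)$, I would write
\[
\sum_{n=0}^\infty h_{n+k}(x,y,a,b|q)\frac{t^n}{(q;q)_n}=\widetilde{L}(a,b;\theta_{xy})\Bigg\{\sum_{n=0}^\infty p_{n+k}(y,x)\frac{t^n}{(q;q)_n}\Bigg\},
\]
the interchange of the operator with the (convergent) series being justified just as in the earlier theorems of this kind.

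The heart of the argument then reduces to evaluating the shifted Cauchy generating function $\sum_{n=0}^\infty p_{n+k}(y,x)\,t^n/(q;q)_n$ in closed form. Using the definition $p_n(y,x)=(x/y;q)_n\,y^n$ coming from (\ref{def}), I would split the $q$-shifted factorial via $(x/y;q)_{n+k}=(x/y;q)_k\,(xq^k/y;q)_n$, which yields
\[
p_{n+k}(y,x)=p_k(y,x)\,(xq^k/y;q)_n\,y^n.
\]
Substituting this and recognizing the resulting series $\sum_{n=0}^\infty (xq^k/y;q)_n\,(yt)^n/(q;q)_n$ as a ${}_1\Phi_0$, the $q$-binomial theorem (\ref{putt}) evaluates it to $(xq^kt;q)_\infty/(yt;q)_\infty$. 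Finally the factorization $(xq^kt;q)_\infty=(xt;q)_\infty/(xt;q)_k$ gives
\[
\sum_{n=0}^\infty p_{n+k}(y,x)\frac{t^n}{(q;q)_n}=\frac{p_k(y,x)\,(xt;q)_\infty}{(xt;q)_k\,(yt;q)_\infty},
\]
which is precisely the argument of $\widetilde{L}(a,b;\theta_{xy})$ appearing in (\ref{aqaGene}).

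The main obstacle is this intermediate shifted generating function rather than the operator manipulation, which is routine. Care must be taken with the convergence condition $|yt|<1$ needed for the $q$-binomial theorem and with the direction of the index shift (the factorial splits as $(x/y;q)_k\,(xq^k/y;q)_n$ and not the reverse), so that the factor $(xt;q)_k$ lands in the denominator exactly as stated. Once the closed form above is in hand, applying $\widetilde{L}(a,b;\theta_{xy})$ to both sides immediately produces the asserted identity (\ref{aqaGene}), with no further computation required.
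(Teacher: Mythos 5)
Your proposal is correct and follows essentially the same route as the paper: both reduce the problem to the shifted Cauchy generating function via the factorization $p_{n+k}(y,x)=p_k(y,x)\,p_n(y,q^kx)$ (your splitting $(x/y;q)_{n+k}=(x/y;q)_k\,(xq^k/y;q)_n$ is exactly this identity), evaluate it by the $q$-binomial theorem (which the paper invokes in its homogeneous form (\ref{gener})), and then use $(xtq^k;q)_\infty=(xt;q)_\infty/(xt;q)_k$. The only difference is notational — you write the intermediate series as a ${}_1\Phi_0$ while the paper writes it as $\sum_n p_n(y,q^kx)t^n/(q;q)_n$ — and your explicit attention to the convergence condition $|yt|<1$ is a welcome addition.
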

\begin{proof} We observe that:
\bea 
\sum_{n=0}^\infty  h_{n+k}(x,y,a,b|q) \frac{t^n}{(q;q)_n}&=&\sum_{n=0}^\infty \widetilde{L}(a,b; \theta_{xy})\Big\{p_{n+k}(y,x)\Big\}\frac{t^n}{(q;q)_n}\cr
&=& \widetilde{L}(a,b; \theta_{xy})\Bigg\{p_k(y,x)\sum_{n=0}^\infty p_n(y,q^kx)\frac{t^n}{(q;q)_n}\Bigg\} \cr
&=&\widetilde{L}(a,b;\theta_{xy})\Bigg\{p_k(y,x)\frac{(xtq^k;q)_\infty}{(yt;q)_\infty}\Bigg\}\cr
&=&\widetilde{L}(a,b; \theta_{xy})\Bigg\{\frac{p_k(y,x) (xt;q)_\infty}{ (xt;q)_k(yt;q)_\infty}\Bigg\}\nonumber
\eea
which evidently completes the proof of assertion (\ref{aaGene}). 
\end{proof}
Setting $k=0$ in  (\ref{aqaGene}), we get the generating function (\ref{aaGene}) for the $q$-polynomials $h_n(x,y,a,b|q)$.  

\begin{theorem}(Roger's-Type formula for $h_n(x,y,a,b|q)$)
\label{Rog}
The following Rogers-Type formula  holds for the $q$-polynomials $h_n(x,y,a,b|q)$:
\be  
\label{rule2}
\sum_{n=0}^\infty\sum_{m=0}^\infty h_{n+m}(x,y,a,b|q)    \frac{  t^n}{(q;q)_n}  \frac{  s^m}{(q;q)_m}=  \widetilde{L}(a,b; \theta_{xy})\Bigg\{\frac{(xs;q)_\infty}{(ys;q)_\infty} \;  {}_{2}\Phi_1\left(\begin{array}{c}x/y,0
 \\
xs
 \end{array}\Big|
q;yt\right)\Bigg\}
\ee
\be
=  \widetilde{L}(a,b; \theta_{xy})\Bigg\{\frac{(xs;q)_\infty}{(yt,ys;q)_\infty} \;  {}_{1}\Phi_1\left(\begin{array}{c}ys
 \\
xs
 \end{array}\Big|
q;xt\right)\Bigg\}.\label{rule22}
\ee 
\end{theorem}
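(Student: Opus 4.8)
The plan is to mirror the operator method already used for the generating and extended generating functions of $h_n(x,y,a,b|q)$, thereby reducing the double sum to a Rogers-type identity for the classical Cauchy polynomials $p_n(y,x)$. First I would invoke the operational representation $\widetilde{L}(a,b;\theta_{xy})\{p_n(y,x)\}=h_n(x,y,a,b|q)$ and, using the linearity of $\widetilde{L}$, pull the operator outside the double summation:
\[
\sum_{n=0}^\infty\sum_{m=0}^\infty h_{n+m}(x,y,a,b|q)\frac{t^n}{(q;q)_n}\frac{s^m}{(q;q)_m}
=\widetilde{L}(a,b;\theta_{xy})\Bigg\{\sum_{n=0}^\infty\sum_{m=0}^\infty p_{n+m}(y,x)\frac{t^n}{(q;q)_n}\frac{s^m}{(q;q)_m}\Bigg\}.
\]
Everything then hinges on evaluating the inner double sum in closed form.

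For that inner sum I would use the classical $q$-exponential operator $R(xD_q)=\widetilde{E}(0,x;D_q)$, acting on the variable $y$, for which $R(xD_q)\{y^n\}=p_n(y,x)$. Pulling $R(xD_q)$ out of the double sum and summing the two resulting geometric $q$-series via Euler's identity (\ref{q-expo-alpha}) gives
\[
\sum_{n=0}^\infty\sum_{m=0}^\infty p_{n+m}(y,x)\frac{t^n}{(q;q)_n}\frac{s^m}{(q;q)_m}
=R(xD_q)\Bigg\{\frac{1}{(yt,ys;q)_\infty}\Bigg\}.
\]
Applying the identity (\ref{v10}) with the roles of $x$ and $y$ interchanged then collapses the right-hand side to $\dfrac{(xs;q)_\infty}{(ys;q)_\infty}\,{}_{2}\Phi_1\!\left(x/y,0;\,xs;\,q;\,yt\right)$, which is exactly the expression appearing inside $\widetilde{L}$ in (\ref{rule2}). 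Substituting this back establishes the first equality.

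To obtain the second form (\ref{rule22}) I would apply the transformation (\ref{sub}), again with $x$ and $y$ interchanged, namely ${}_{2}\Phi_1\!\left(x/y,0;\,xs;\,q;\,yt\right)=\dfrac{1}{(yt;q)_\infty}\,{}_{1}\Phi_1\!\left(ys;\,xs;\,q;\,xt\right)$, and absorb the factor $1/(yt;q)_\infty$ into the prefactor; this turns $\dfrac{(xs;q)_\infty}{(ys;q)_\infty}$ into $\dfrac{(xs;q)_\infty}{(yt,ys;q)_\infty}$ and yields the ${}_{1}\Phi_1$ form. Note that the whole manipulation happens inside the braces of $\widetilde{L}(a,b;\theta_{xy})$, so the two displayed right-hand sides of the theorem are literally $\widetilde{L}$ applied to two equal closed-form functions of $x$ and $y$.

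The routine parts are the two geometric summations and the bookkeeping of the $x\leftrightarrow y$ substitutions in (\ref{v10}) and (\ref{sub}); the one genuine point requiring care is the interchange of the infinite operator series $\widetilde{L}(a,b;\theta_{xy})=\sum_k \frac{q^{\binom{k}{2}}(a;q)_k}{(q;q)_k}(b\theta_{xy})^k$ with the double summation. I would justify this by absolute convergence in the region $\max\{|xt|,|xs|\}<1$, so that the inner Cauchy double sum together with all of the $\theta_{xy}$-derivatives converge and termwise rearrangement is legitimate. The main obstacle is thus analytic (legitimacy of the interchange) rather than algebraic, since the algebraic content is entirely supplied by the already-established identities (\ref{v10}) and (\ref{sub}).
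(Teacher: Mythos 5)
Your proof is correct, and its skeleton---pulling $\widetilde{L}(a,b;\theta_{xy})$ out of the double sum via $\widetilde{L}(a,b;\theta_{xy})\{p_{n+m}(y,x)\}=h_{n+m}(x,y,a,b|q)$, evaluating the resulting Cauchy double sum in closed form, and then passing to the ${}_{1}\Phi_1$ form via (\ref{sub}) with $x$ and $y$ interchanged---matches the paper's. Where you differ is the middle step. The paper evaluates $\sum_{n,m}p_{n+m}(y,x)\,t^n s^m/\bigl((q;q)_n(q;q)_m\bigr)$ directly and self-containedly: it factors $p_{n+m}(y,x)=p_n(y,x)\,p_m(y,q^n x)$, sums the $m$-series by the generating function (\ref{gener}) to get $(xsq^n;q)_\infty/(ys;q)_\infty$, rewrites $(xsq^n;q)_\infty=(xs;q)_\infty/(xs;q)_n$, and recognizes the remaining $n$-sum as the ${}_{2}\Phi_1$ using $p_n(y,x)=(x/y;q)_n y^n$. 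You instead introduce a second, auxiliary operator $R(xD_q)$ acting on the variable $y$, collapse the double sum to $R(xD_q)\bigl\{1/(yt,ys;q)_\infty\bigr\}$ by Euler's identity (\ref{q-expo-alpha}), and then quote the cited identity (\ref{v10}) (with $x\leftrightarrow y$) to obtain the same ${}_{2}\Phi_1$. Both are valid: the paper's route re-derives the Rogers-type formula for the classical Cauchy polynomials inline, which makes the origin of the ${}_{2}\Phi_1$ transparent, while yours is shorter and reuses the Srivastava--Abdlhusein identity already stated in the introduction, at the harmless cost of nesting a second operator computation (used only as a device to prove a closed-form identity in $x,y$ before $\widetilde{L}$ is applied). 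Your explicit attention to justifying the interchange of the operator series with the double sum by absolute convergence for $\max\{|xt|,|xs|\}<1$ is a point the paper passes over in silence.
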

 \begin{proof}
\be 
 \sum_{n=0}^\infty\sum_{m=0}^\infty h_{n+m}(x,y,a,b|q)    \frac{  t^n}{(q;q)_n}  \frac{  s^m}{(q;q)_m}
=  
\sum_{n=0}^\infty\sum_{m=0}^\infty \widetilde{L}(a,b; \theta_{xy})\Big\{p_{n+m}(y,x)\Big\}\frac{t^n}{(q;q)_n}  \frac{  s^m}{(q;q)_m}\nonumber \ee
\be 
= \widetilde{L}(a,b; \theta_{xy})\Bigg\{\sum_{n=0}^\infty p_n(y,x)\frac{t^n}{(q;q)_n}\sum_{m=0}^\infty p_m(y,q^nx)\frac{s^m}{(q;q)_m}\Bigg\} \nonumber\ee
\be
=\widetilde{L}(a,b; \theta_{xy})\Bigg\{\sum_{n=0}^\infty p_n(y,x)\frac{t^n}{(q;q)_n} \frac{(xsq^n;q)_\infty}{(ys;q)_\infty}\Bigg\}\nonumber\ee
\be
=\widetilde{L}(a,b; \theta_{xy})\Bigg\{ \frac{(xs;q)_\infty}{(ys;q)_\infty}\sum_{n=0}^\infty p_n(y,x)\frac{t^n}{(xs;q)_n(q;q)_n} \Bigg\}\nonumber\ee
\be
=\widetilde{L}(a,b; \theta_{xy})\Bigg\{ \frac{(xs;q)_\infty}{(ys;q)_\infty}\sum_{n=0}^\infty  \frac{(x/y;q)_n\,(yt)^n}{(xs;q)_n(q;q)_n} \Bigg\}\nonumber\ee
\be
= \widetilde{L}(a,b;\theta_{xy})\Bigg\{\frac{(xs;q)_\infty}{(ys;q)_\infty} \;  {}_{2}\Phi_1\left(\begin{array}{c}x/y,0
 \\
xs
 \end{array}\Big|
q;yt\right)\Bigg\}.\nonumber
\ee 
 The proof of (\ref{rule22}) of Theorem \ref{Rog}  will be completed when we replace $y$ by $x$ and $x$ by $y$, respectively,  in  (\ref{sub}).  
\end{proof}

\begin{theorem}(Mehler's  formula for $h_n(x,y,a,b|q)$)
We have:
\be 
\label{rule2}
\sum_{n=0}^\infty h_n(x,y,a,b|q) h_{n}(u,v,c,d|q)    \frac{  t^n}{(q;q)_n}  
\ee 
\be 
=\widetilde{L}(a,b; \theta_{xy})\Bigg\{ \frac{(xt;q)_\infty}{(yt ;q)_\infty}  {}_{3}\Phi_3\left(\begin{array}{c} x/y, u/v, c
 \\
0,0,xt
 \end{array}\Big|
q; dvyt\right)  \Bigg\}. \nonumber\ee
\end{theorem}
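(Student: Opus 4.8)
The plan is to imitate the operator proofs of the preceding generating-function, Rogers and Mehler theorems, using the operational representation $\widetilde{L}(a,b;\theta_{xy})\{p_n(y,x)\}=h_n(x,y,a,b|q)$ recorded in (\ref{sddefR}). First I would replace only the first factor $h_n(x,y,a,b|q)$ by $\widetilde{L}(a,b;\theta_{xy})\{p_n(y,x)\}$. Since $\theta_{xy}$ acts on the variables $x,y$ only, while $h_n(u,v,c,d|q)$ and $t^n/(q;q)_n$ are free of $x,y$, I may interchange the operator with the summation over $n$ and pull it outside, obtaining
\[
\sum_{n=0}^\infty h_n(x,y,a,b|q)\,h_n(u,v,c,d|q)\frac{t^n}{(q;q)_n}=\widetilde{L}(a,b;\theta_{xy})\Bigg\{\sum_{n=0}^\infty p_n(y,x)\,h_n(u,v,c,d|q)\frac{t^n}{(q;q)_n}\Bigg\}.
\]
Thus the theorem reduces to evaluating the inner sum $S(x,y):=\sum_{n\ge 0}p_n(y,x)\,h_n(u,v,c,d|q)\,t^n/(q;q)_n$ as a function of $x,y$ and showing it equals $\frac{(xt;q)_\infty}{(yt;q)_\infty}\,{}_3\Phi_3(x/y,u/v,c;0,0,xt|q;dvyt)$; the outer operator is then merely carried along, exactly as in the statement.

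To evaluate $S(x,y)$ I would also represent the second factor as an operator, writing $h_n(u,v,c,d|q)=\widetilde{L}(c,d;\theta_{uv})\{p_n(v,u)\}$, and again pull $\widetilde{L}(c,d;\theta_{uv})$ out of the $n$-sum. This leaves the pure bilinear Cauchy sum $\sum_{n\ge 0}p_n(y,x)p_n(v,u)\,t^n/(q;q)_n$, which by $p_n(y,x)=(x/y;q)_n y^n$ and $p_n(v,u)=(u/v;q)_n v^n$ is, directly from the definition (\ref{defe}) of the basic hypergeometric series, the single ${}_2\Phi_1(x/y,u/v;0\,|\,q;yvt)$. Hence $S(x,y)=\widetilde{L}(c,d;\theta_{uv})\{{}_2\Phi_1(x/y,u/v;0\,|\,q;yvt)\}$, and I would compute this by applying $\theta_{uv}^k$ to $p_m(v,u)$ through (\ref{deule2}), reindexing $m\mapsto m-k$ so as to factor out $(x/y;q)_k$ via the standard splitting $(x/y;q)_{m+k}=(x/y;q)_k(xq^k/y;q)_m$, and collecting the resulting series term by term.

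The hard part will be the final recognition step. After $\widetilde{L}(c,d;\theta_{uv})$ is applied, the remaining double series carries a \emph{shifted} parameter $xq^k/y$ inside an inner ${}_2\Phi_1$, so the two summations do not separate termwise and a naive reindexing does not produce a single ${}_3\Phi_3$; indeed the lower entry $xt$ together with the product prefactor $(xt;q)_\infty/(yt;q)_\infty$ must both be generated by a genuine $q$-series transformation of the kind already derived in (\ref{sub}). Concretely, I would write $(xt;q)_\infty/(xt;q)_n=(xtq^n;q)_\infty$, expand it by Euler's identity (\ref{q-Expo-alpha}), and resum against $1/(yt;q)_\infty$ from (\ref{q-expo-alpha}) in order to match the convolution created by the operator, checking that the accumulated signs and $q^{\binom{k}{2}}$ weights recombine into the ${}_3\Phi_3$ normalization. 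This triple interchange of summations and the verification that the weights collapse correctly is the delicate point on which the whole argument turns; everything preceding it is routine bookkeeping with the $q$-shifted factorial identities $(A;q)_{m+k}=(A;q)_k(Aq^k;q)_m$ and $(q;q)_n/(q;q)_{n-k}$ already used throughout the paper.
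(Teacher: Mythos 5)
Your route is genuinely different from the paper's: the paper never introduces a second operator, but instead expands $h_n(u,v,c,d|q)$ through its definition (\ref{defR}), interchanges the two sums, reindexes $n\mapsto n+k$, splits $p_{n+k}(y,x)=p_k(y,x)\,p_n(y,q^kx)$, and evaluates the inner $n$-sum by the Cauchy generating function (\ref{gener}). Your double-operator reduction is formally correct as far as you take it: pulling out $\widetilde{L}(a,b;\theta_{xy})$, identifying the bilinear sum $\sum_{n\geq 0}p_n(y,x)p_n(v,u)\,t^n/(q;q)_n={}_2\Phi_1(x/y,u/v;0\,|\,q;yvt)$, and applying $\widetilde{L}(c,d;\theta_{uv})$ to obtain
\be
S(x,y)=\sum_{k=0}^\infty\frac{(-1)^kq^{\binom{k}{2}}(c;q)_k(x/y;q)_k(dyt)^k}{(q;q)_k}\;{}_{2}\Phi_1\left(\begin{array}{c}xq^k/y,\ u/v\\ 0\end{array}\Big|\,q;yvt\right).
\ee
But the final step --- collapsing this to $\frac{(xt;q)_\infty}{(yt;q)_\infty}\,{}_3\Phi_3$ --- is exactly what you leave as a sketch (``expand by Euler's identity and check that the weights recombine''), and that is a genuine gap: you never exhibit the required transformation.

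Moreover, the gap cannot be filled, because the identity you are aiming at is false; the obstruction you noticed (the shifted parameter $xq^k/y$ prevents the two sums from separating) is real and fatal. Take $a=b=c=d=0$: then $h_n(x,y,0,0|q)=p_n(y,x)$, $h_n(u,v,0,0|q)=p_n(v,u)$, the operator $\widetilde{L}(0,0;\theta_{xy})$ is the identity, and the ${}_3\Phi_3$ has argument $0$, so the theorem would assert ${}_2\Phi_1(x/y,u/v;0\,|\,q;yvt)=(xt;q)_\infty/(yt;q)_\infty$; comparing coefficients of $t$ gives $(y-x)(v-u)/(1-q)$ on the left against $(y-x)/(1-q)$ on the right. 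The paper's own proof ``succeeds'' only through an invalid reindexing: after the substitution $n\mapsto n+k$, the factor $p_{n-k}(v,u)$ must become $p_n(v,u)$, but the paper writes $p_k(v,u)$, thereby decoupling the two indices; only with this error does the inner sum reduce via (\ref{gener}) to $(xtq^k;q)_\infty/(yt;q)_\infty$ and produce the stated ${}_3\Phi_3$. With the correct index the inner sum is the bilinear series $\sum_n p_n(v,u)p_n(y,q^kx)\,t^n/(q;q)_n$ --- precisely the non-separating ${}_2\Phi_1$ you ran into. So your computation, carried honestly to its end, disproves the stated formula rather than proving it: the true value of the left-hand side is $\widetilde{L}(a,b;\theta_{xy})\{S(x,y)\}$ with $S(x,y)$ the double series above, not the claimed closed ${}_3\Phi_3$ form.
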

 \begin{proof}
\be 
 \sum_{n=0}^\infty h_n(x,y,a,b|q) h_{n}(u,v,c,d|q)    \frac{  t^n}{(q;q)_n}
=  
\sum_{n=0}^\infty  \widetilde{L}(a,b; \theta_{xy})\Big\{p_{n }(y,x)\Big\} h_{n}(u,v,c,d|q) \frac{t^n}{(q;q)_n}  \nonumber \ee
\be 
= \widetilde{L}(a,b; \theta_{xy})\Bigg\{\sum_{n=0}^\infty p_n(y,x)\frac{t^n}{(q;q)_n}\sum_{k=0}^n  \qbinomial{n}{k}{q}(-1)^{k}q^{\binom{k}{2}}\,d^{k}\,(c;q)_{k}\, p_{n-k}(v,u)\Bigg\} \nonumber\ee
\be 
= \widetilde{L}(a,b; \theta_{xy})\Bigg\{\sum_{n=0}^\infty\sum_{k=n}^\infty \frac{(-1)^{k}   q^{\binom{k}{2}}\,d^{k}\,t^n}{(q;q)_k(q;q)_{n-k}}(c;q)_{k}\, p_{n-k}(v,u) p_{n}(y,x)\Bigg\} \nonumber\ee
\be 
= \widetilde{L}(a,b;\theta_{xy})\Bigg\{\sum_{n=0}^\infty\sum_{k=0}^\infty \frac{ (-1)^kq^{\binom{k}{2}}\,d^{k} t^{n+k}}{(q;q)_k(q;q)_{n}} \,(c;q)_{k}\, p_k(v,u) p_{n+k}(y,x)\Bigg\} \nonumber\ee
\be 
= \widetilde{L}(a,b; \theta_{xy})\Bigg\{\sum_{k=0}^\infty p_k(v,u) p_{k} (y,x)  \frac{ (-1)^k q^{\binom{k}{2}}  (c;q)_{k}\,(dt)^{k}}{(q;q)_{k}}     \sum_{n=0}^\infty p_{n} (y,q^kx) \frac{  t^{n}}{(q;q)_{n}}   \,\Bigg\} \nonumber\ee
\be 
= \widetilde{L}(a,b; \theta_{xy})\Bigg\{\sum_{k=0}^\infty p_k(v,u) p_{k} (y,x)  \frac{ (-1)^k q^{\binom{k}{2}}  (c;q)_{k}\,(dt)^{k}}{(q;q)_{k}}  \frac{(xtq^k;q)_\infty}{(yt ;q)_\infty}     \Bigg\} \nonumber\ee
\be 
= \widetilde{L}(a,b; \theta_{xy})\Bigg\{ \frac{(xt;q)_\infty}{(yt ;q)_\infty}    \sum_{k=0}^\infty      \frac{(-1)^k q^{\binom{k}{2}} (x/y,u/v,c;q)_{k} }{ (xt ;q)_{k}}   \frac{ (dvyt)^{k}}{(q;q)_{k}}   \Bigg\} \nonumber\ee
\be 
= \widetilde{L}(a,b; \theta_{xy})\Bigg\{ \frac{(xt;q)_\infty}{(yt ;q)_\infty}  {}_{3}\Phi_3\left(\begin{array}{c} x/y, u/v, c
 \\
0,0,xt
 \end{array}\Big|
q; dvyt\right)  \Bigg\}.\nonumber\ee

\end{proof}
 


\begin{thebibliography}{}
\bibitem{Mohammed}M. A. Abdlhusein, Two operator representations for the trivariate q-polynomials and Hahn polynomials, Ramanujan J. {\bf 40},   491-509 (2016).

\bibitem{Andrews}G. E.  Andrews,    The theory of partitions, Cambridge Univ. Press, (1985).

\bibitem{Chen2003} W. Y. C. Chen,  A. M. Fu  and  B. Zhang,   The homogeneous $q$-difference operator. Adv. App. Math. {\bf 31}    659-668 (2003).

\bibitem{Liu97} W. Y. C. Chen and Z.-G. Liu,  Parameter augmenting for basic hypergeometric series, II, J. Combin. Theory, Ser. A, {\bf 80}  pp. 175-195, (1997).

\bibitem{GasparRahman}G.   Gasper  and M.  Rahman,  Basic Hypergeometric Series, 2nd edn. Cambridge University Press, Cambridge (2004). 

\bibitem{Goldman}J. Golman and G.-C. Rota, {\it On the foundations of combinatorial theory, IV: Finite vector spaces ans Eulerien generating functions}, Sut. Appl. Math. {\bf 49}, 239-258 (1970).

\bibitem{Johnson} W. P. Johnson,   $q$-Extensions of identities of Abel-Rothe type Discrete Math. {\bf 159} 161-77  (1995).

\bibitem{Ismail1981}E. C.  Ihrig   and M. E. H.  Ismail,   A $q$-umbral calculus, J. Math. Anal. Appl. {\bf 84}, 178-207 (1981).

\bibitem{Koekock}R. Koekock and R. F. Swarttouw, {\it The Askey-scheme of hypergeometric orthogonal polynomials and its $q$-analogue report}, Delft University of Technology,
(1998).

\bibitem{Roman1982} S. Roman,   The theory of the umbral calculus I.  J. Math. Anal. Appl. {\bf 87},  58-115 (1982).

\bibitem{Roman1985}S. Roman, More on the umbral calculus, with emphasis on the 
$q$-umbral calculus,  J. Math. Anal. Appl. {\bf 107} 222-54 (1985 ).

\bibitem{Saadsukhi}H. L. Saad   and  A. A. Sukhi, Another homogeneous $q$-difference operator. Appl. Math. Comput. {\bf 215} 4332-4339 (2010).

\bibitem{Saad} H. L. Saad and A. A. Sukhi,  The $q$-Exponential Operator, Appl. Math. Sci. {\bf 7}, 6369-6380 (2005).

\bibitem{SLATER} L. J. Slatter, {\it Generalized Hypergeometric Functions}, Cambridge Univ. Press, Cambridge/London/New York, (1966).

\bibitem{SrivastaAbdlhusein}H. M. Srivastava and M. A. Abdlhusein, New forms of the Cauchy operator and some of their \break applications,  Russian J. Math. Phys. {\bf 23}, 124--134 (2016).

\bibitem{SrivastaKarlsson}  H. M. Srivastava and 
P. W. Karlsson,  {\it Multiple Gaussian Hypergeometric Series}, Halsted (Ellis Horwood, Chichester); Wiley, New York, (1985). 

\end{thebibliography}
\end{document}